\newtheorem{theorem}{Theorem}[section]
\newtheorem{proposition}[theorem]{Proposition}
\newtheorem{lemma}[theorem]{Lemma}
\newcommand{\R}{{\mathbb R}}
\newcommand{\Z}{{\mathbb Z}}
\renewcommand{\epsilon}{\varepsilon}
\newcommand{\ba}{\mathbf{a}}
\newcommand{\bx}{\mathbf{x}}
\newcommand{\by}{\mathbf{y}}
\renewcommand{\aa}{\bm{\alpha}}
\newcommand{\bb}{\bm{\beta}}
\newcommand{\Vect}{\text{Vect}}
\newcommand{\gr}{\text{\textnormal{gr}}}
\title{Absolute gradings on ECH and Heegaard Floer homology}
\author[Vinicius G. B. Ramos]{Vinicius Gripp Barros Ramos}
\begin{document}
\begin{abstract}
In joint work with Yang Huang, we defined a canonical absolute grading on Heegaard Floer homology by homotopy classes of oriented 2-plane fields. A similar grading was defined on embedded contact homology by Michael Hutchings. In this paper we show that the isomorphism between these homology theories defined by Colin-Ghiggini-Honda preserves this grading.
\end{abstract}
\maketitle

\section{Introduction}

Let $Y$ be a closed, connected and oriented three-manifold. The embedded contact homology (ECH) and the Heegaard Floer homology of $Y$ are invariants that have been studied and computed for many manifolds. ECH was defined by Hutchings using a contact form on $Y$, see~\cite{hutcast}, and Heegaard Floer homology was defined in~\cite{OS1} by Ozsv\'{a}th-Szab\'{o} using a Heegaard decomposition of $Y$. These two homology theories have very distinct flavors, but they have recently been shown to be isomorphic by Colin-Ghiggini-Honda~\cite{cgh1,cgh2,cgh3}. More specifically, they construct an isomorphism $\Phi:HF^+(-Y)\to ECH(Y)$. Here $HF^+(-Y)$ is a version of Heegaard Floer homology of $Y$ with the opposite orientation, which is isomorphic to the $-$ version of Heegaard Floer cohomology of $Y$ with its original orientation.

In joint work with Yang Huang~\cite{hr}, we defined a canonical absolute grading on Heegaard Floer homology by homotopy classes of oriented 2-plane fields on $Y$. A similar absolute grading had been defined in ECH by Hutchings \cite{ir}. Since these absolute gradings are defined in very different ways, it is not obvious that the isomorphism $\Phi$ would preserve them. On the other hand, when a contact form is given, it follows from Colin, Ghiggini and Honda's work that $\Phi$ maps what is called the contact invariant in one Floer homology to that of the other. It follows that in that particular spin-c structure, the absolute grading is preserved. The goal of this paper is to show that this holds in all generality as we now explain.

The orientation of $Y$ induces an isomorphism from this set to the set of homotopy classes of nonvanishing vector fields $\Vect(Y)$. So in this paper we will do all of our constructions with $\Vect(Y)$. For $\rho\in\Vect(Y)$, let $HF^+_{\rho}(-Y)$ and $ECH_{\rho}(Y)$ denote the submodules of $HF^+(-Y)$ and $ECH(Y)$, respectively, consisting of all elements of grading $\rho\in\Vect(Y)$. The main result of this paper is the following theorem.
\begin{theorem}\label{mainthm}
Let $\Phi:HF^+(-Y)\to ECH(Y)$ be the isomorphism constructed by Colin-Ghiggini-Honda. Then $\Phi$ maps $HF^+_{\rho}(-Y)$ to $ECH_{\rho}(Y)$ for all $\rho\in\Vect(Y)$.
\end{theorem}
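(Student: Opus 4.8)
The plan is to avoid analyzing the Colin--Ghiggini--Honda isomorphism $\Phi$ directly — which would be hopeless at the chain level — and instead exploit the \emph{naturality} of both absolute gradings with respect to the relevant cobordism and TQFT-type structures, reducing the statement to a single model computation. Concretely, both $\Vect(Y)$-gradings are pinned down by two properties: (i) within each $\Spinc$-structure they refine the relative $\Z/d$-grading, and (ii) they transform in a prescribed way under the maps induced by (symplectic/Lefschetz) cobordisms, where the grading shift is computed from the homotopy class of a $2$-plane field on the cobordism (an Euler-class/relative-first-Chern-number formula, as in Hutchings's grading on ECH and the Huang--Ramos grading on $HF^+$). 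Since $\Phi$ is known by CGH to intertwine the cobordism maps (this is part of how the isomorphism is built, via open book decompositions and the identification of the ECH cobordism maps with the Heegaard Floer ones), property (ii) forces $\Phi$ to shift the two absolute gradings by the \emph{same} element of $\Vect(Y)$, independent of $\rho$; then property (i) together with a single base-point check upgrades ``same shift'' to ``no shift.''

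The key steps, in order, would be as follows. First, recall from~\cite{ir} and~\cite{hr} the precise cobordism-naturality statements for the two absolute gradings: for a suitable cobordism $X$ from $Y_0$ to $Y_1$ and a $\Spinc$-structure on $X$, the induced map shifts the $\Vect$-grading by a quantity $\gr(X,\mathfrak{s})$ expressible purely in terms of characteristic-class data of $X$. Second, observe that because $\Phi$ commutes with these cobordism maps, for any two spin$^c$ structures connected by a cobordism the ``grading defect'' $\delta(\mathfrak{s}) := (\text{grading shift of }\Phi\text{ on }\mathfrak{s})$ is constrained: composing cobordisms and using additivity of $\gr(X,\mathfrak{s})$ shows $\delta$ is locally constant and in fact globally a fixed integer $c$ (using that $Y$ is connected, so all spin$^c$ structures are cobordant and the relative gradings within a spin$^c$ structure already match by CGH's isomorphism). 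Third, fix one explicit manifold and spin$^c$ structure where both sides are known — the natural choice is $Y = S^3$, or more robustly $Y=S^1\times S^2$ or a lens space, where $HF^+$, $ECH$, the absolute $\Vect$-gradings, and $\Phi$ have all been computed — and verify $c=0$ there by direct comparison with the standard model (the contact structure from an open book, the corresponding $2$-plane field, and its homotopy class via the $d_3$ / Gompf invariant). Fourth, propagate $c=0$ to all $Y$ using the cobordism argument from step two, or alternatively use the behavior of both gradings under connected sums and the stabilization maps in the CGH construction.

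The main obstacle will be step one combined with step two: making the cobordism-naturality of the \emph{absolute} (not just relative) $\Vect$-grading precise enough on both sides, and checking that the CGH isomorphism genuinely respects the maps associated to the cobordisms one needs (the construction of $\Phi$ goes through a long chain — open book decompositions, sutured Floer homology, contact handle attachments, and the Eliashberg/Giroux correspondence — so one must locate, within that chain, a place where a naturality statement for gradings can be invoked or proved). A cleaner alternative, which I would pursue in parallel, is to bypass cobordisms entirely: both $\Vect$-gradings are characterized by their relative version plus a normalization at a single distinguished element (for instance, the ECH grading of the empty set of Reeb orbits, resp.\ the $HF^+$ grading of the contact class/bottom element coming from an open book), and CGH's isomorphism is known to send distinguished generators to distinguished generators (the ECH contact invariant to the Heegaard Floer contact invariant, by work of Hutchings--Taubes and CGH). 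If one checks that these distinguished classes carry the same $2$-plane-field homotopy class on both sides — which is a computation internal to a single open book, not involving $\Phi$ at the chain level — then the relative-grading compatibility (already established by CGH) immediately upgrades to the absolute statement of Theorem~\ref{mainthm}. I expect the honest proof to be some hybrid: reduce to a normalization at the contact class, identify that normalization via an open book model, and use the relative-grading isomorphism to finish.
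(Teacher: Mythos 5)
Your plan has two genuine gaps, and they sit exactly at the points you flag as ``main obstacles.'' First, the cobordism route is built on the assumption that $\Phi$ is known to intertwine the ECH and Heegaard Floer cobordism maps. This is not part of the Colin--Ghiggini--Honda construction: their isomorphism is built through open book decompositions at the chain level, while the ECH cobordism maps are defined only via Seiberg--Witten Floer cohomology (Hutchings--Taubes), and the compatibility of $\Phi$ with these maps was not established and cannot be cited. So the mechanism you use to propagate the ``grading defect'' across $\Spinc$ structures is unavailable. Second, the cleaner alternative---relative grading plus one normalization at the contact class / empty orbit set---only pins down the grading on the single $\Spinc$ structure containing that distinguished class; the $\Vect(Y)$-grading on each $\Spinc$ summand is an independent torsor, so a single base-point check does not upgrade to the full statement without some other way to compare summands (which was the role of the now-missing cobordism naturality). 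Moreover, even the ``relative-grading compatibility already established by CGH'' that both routes lean on is not a citable black box: it is essentially equivalent to the chain-level index identity that constitutes the real work.

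For comparison, the paper proceeds entirely differently and at the chain level: it factors $\widehat{\Phi}=\psi\circ\widetilde{\Phi}\circ\psi'$ through the open book complexes $\widehat{CF}(S,\ba,\varphi(\ba))$ and $ECC_{2g}(N_{(S,\varphi)},\lambda)$, shows $\psi'$ and $\psi$ preserve the $\Vect(Y)$-grading by explicit vector field manipulations (Lemma~\ref{lem:iso}, Lemma~\ref{lem:greh}, Lemma~\ref{lem:cob}), and then proves the key identity $\gr(\bx)-\gr(\gamma)=I(A)$ for every relative homology class $A\in H_2(W,\bx,\gamma)$ (Proposition~\ref{prop:indiso}), via a relative Pontryagin--Thom argument comparing the two grading vector fields along a cobordism built from the holomorphic curve. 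Since $\widetilde{\Phi}$ counts curves with $I=0$, grading preservation is immediate generator by generator, for every $\Spinc$ structure at once, with no normalization or cobordism-naturality input; the plus-version then follows from the degree-$0$ maps $\iota_*$ and the commuting square~\eqref{eq:comm}. If you want to salvage your approach, the honest content you would have to supply is precisely such a pointwise index computation, at which point the normalization/cobordism scaffolding becomes unnecessary.
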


We recall that both $HF^+(-Y)$ and $ECH(Y)$ admit a map $U$ whose mapping cone is denoted by $\widehat{HF}(-Y)$ and $\widehat{ECH}(Y)$, respectively. In order to show that $\Phi$ is an isomorphism, Colin, Ghiggini and Honda first construct an isomorphism $\widehat{\Phi}:\widehat{HF}(-Y)\to \widehat{ECH}(Y)$. They also show that the following diagram commutes.
\begin{equation}
\xymatrix{
\widehat{HF}(-Y) \ar[d]^{\widehat{\Phi}} \ar[r]^{\iota_*}& HF(-Y) \ar[d]^{\Phi}\\
\widehat{ECH}(Y)  \ar[r]^{\iota_*} & ECH(Y)
}\label{eq:comm}
\end{equation}

Here the horizontal maps $\iota_*$ denote the natural maps given by the mapping cone construction. In order to show that $\Phi$ preserves the absolute grading, it is enough to prove that both maps $\iota_*$ and $\widehat{\Phi}$ do.

The map $\widehat{\Phi}$ is defined as a composition $\widehat{\Phi}=\psi\circ\widetilde{\Phi}\circ\psi'$ as follows.
\begin{equation*}
\widehat{HF}(-Y)\stackrel{\psi'}{\longrightarrow}\widehat{HF}(S,\ba,\varphi(\ba))\stackrel{\widetilde{\Phi}}{\longrightarrow} ECH_{2g}(N_{(S,\varphi)},\lambda)\stackrel{\psi}{\longrightarrow}\widehat{ECH}(Y).
\end{equation*}
Here $\widehat{HF}(S,\ba,\varphi(\ba))$ is the homology of a chain group computed from the page of an open book decomposition $(S,\varphi)$ of $Y$ and $ECH_{2g}(N_{(S,\varphi)},\lambda)$ is the homology of a chain complex of generated by sets of Reeb orbits whose total intersection with a page is $2g$, where $\lambda$ is an appropriate contact form on $Y$. The maps $\psi'$, $\widetilde{\Phi}$ and $\psi$ are all isomorphisms and we will show that all of them preserve the absolute grading.

This paper is organized as follows. In Section~\ref{sec:abs}, we review the definitions of chain complexes of Heegaard Floer homology and ECH and the absolute grading on them. We explain how the chain complexes $\widehat{CF}(S,\ba,\varphi(\ba))$ and $ECC_{2g}(N_{(S,\varphi)},\lambda)$ are obtained from an open book decomposition and how the absolute grading is defined on them. We also show that $\psi'$ preserves the grading. In Section~\ref{sec:thm}, we recall some of the steps to construct the isomorphism $\widetilde{\Phi}$ and we prove that it preserves the absolute grading. This is the core of the proof of Theorem \ref{mainthm}. Finally, in Section \ref{sec:det}, we recall the construction of the map $\psi$ and we prove that it preserves the grading, finishing the proof of Theorem \ref{mainthm}. 

\section{The absolute gradings}\label{sec:abs}

\subsection{The grading on Heegaard Floer homology}\label{sec:absgrad}

A pointed Heegaard diagram is a quadruple $(\Sigma,\aa,\bb,z)$, where $\Sigma$ is a closed oriented surface of genus $g$, the tuples $\aa=(\alpha_1,\dots,\alpha_g)$ and $\bb=(\beta_1,\dots,\beta_g)$ are $g$-tuples of disjoint circles on $\Sigma$ which are linearly independent in $H_1(\Sigma)$ and $z$ is a point on $\Sigma$ in the complement of all of the circles $\alpha_i$ and $\beta_j$. Given a pointed Heegaard diagram $(\Sigma,\aa,\bb,z)$, an intersection point is a $g$-tuple $\bx=(x_1,\dots,x_g)$, where $x_i\in\alpha_i\cap\beta_{\sigma(i)}$ and $\sigma$ is a permutation of $\{1,\dots,g\}$. The chain complex $\widehat{CF}(\Sigma,\aa,\bb,z)$ is the $\Z$-module generated by the intersection points. One can define a differential $\partial$ on this complex and one can prove that $\partial^2=0$. The homology of this chain complex is denoted by $\widehat{HF}(Y)$. It can be shown that the homology does not depend on the pointed Heegaard diagram and hence it is an invariant of $Y$. For details, see~\cite{OS1}.

We now recall the definition of the other versions of Heegaard Floer homology. The complex $CF^{\infty}(\Sigma,\aa,\bb,z)$ is defined to be the $\Z$-module generated by $[\bx,n]$, where $\bx$ is an intersection point and $n\in\Z$. One can extend $\partial$ to $CF^{\infty}(\Sigma,\aa,\bb,z)$ so that $\partial^2=0$.
One can now define $CF^-(\Sigma,\aa,\bb,z)$ to be the submodule of $CF^{\infty}(\Sigma,\aa,\bb,z)$ generated by $[\bx,n]$, for $n<0$. One also defines $CF^+(\Sigma,\aa,\bb,z)$ to be the quotient of $CF^{\infty}(\Sigma,\aa,\bb,z)$ by $CF^-(\Sigma,\aa,\bb,z)$. The homologies of these complexes are denoted by
$HF^{\infty}(Y),\,HF^-(Y)\text{ and }HF^+(Y)$, respectively.

We will now recall the absolute grading on these homology groups. Let $(f,V)$ be a pair consisting of a self-indexing Morse function $f$ on $Y$ and a gradient-like vector field $V$, i.e. $df(V)>0$, whenever $df\neq 0$. We assume that $f$ has exactly one index 0 and one index 3 critical points. We also assume that all stable and unstable manifolds intersect transversely. For each index 1 critical point $p_i$, let $U_i$ denote the unstable manifold containing $p_i$ and, for each index 2 critical point $q_j$, let $S_j$ denote the stable manifold containing $q_j$. The pair $(f,V)$ is said to be compatible with the Heegaard diagram $(\Sigma,\aa,\bb)$ if
\begin{itemize}
 \item $\Sigma=f^{-1}(3/2)$,
 \item $\alpha_i=U_i\cap\Sigma$ and $\beta_j=S_j\cap\Sigma$, for all $1\le i,j\le g$.
\end{itemize}

An intersection point $\bx$ determines $g$ flow lines $\gamma_1,\dots,\gamma_g$ connecting the points $p_i$ to the points $q_j$. The basepoint $z$ determines a flow line $\gamma_0$ from the index 0 critical point to the index 3 critical point. Outside the union of small neighborhoods of $\gamma_0,\dots,\gamma_g$, which we denote by $\nu(\gamma_0),\dots,\nu(\gamma_g)$, the vector field $V$ does not vanish. The absolute grading $\gr(\bx)$ is the homotopy class of an appropriate extension of $V$ to the union of all $\nu(\gamma_i)$, as we briefly explain. Figure~\ref{nbhd}(a) illustrates two transverse vertical sections of the vector field $V$ in $\nu(\gamma_i)$, for some $i\ge 1$ and Figure~\ref{nbhd}(b) illustrates a vertical section of $V$ in $\nu(\gamma_0)$. Now we substitute $V$ in these neighborhoods by the vector fields illustrated in Figure~\ref{defn}. We note that in $\nu(\gamma_0)$, the vector field in Figure~\ref{defn}(b) has a circle of zeros. We modify the vector field in a neighborhood of this circle so that it rotates clockwise on the $xy$-plane. Let $V^\bx$ be the vector field obtained under this procedure. Then we define $\gr(\bx)$ to be the homotopy class of $V^\bx$. For more details of this construction, see~\cite[\S2.1]{hr}.
\begin{figure}[ht]
\centering
    \begin{overpic}[scale=.25]{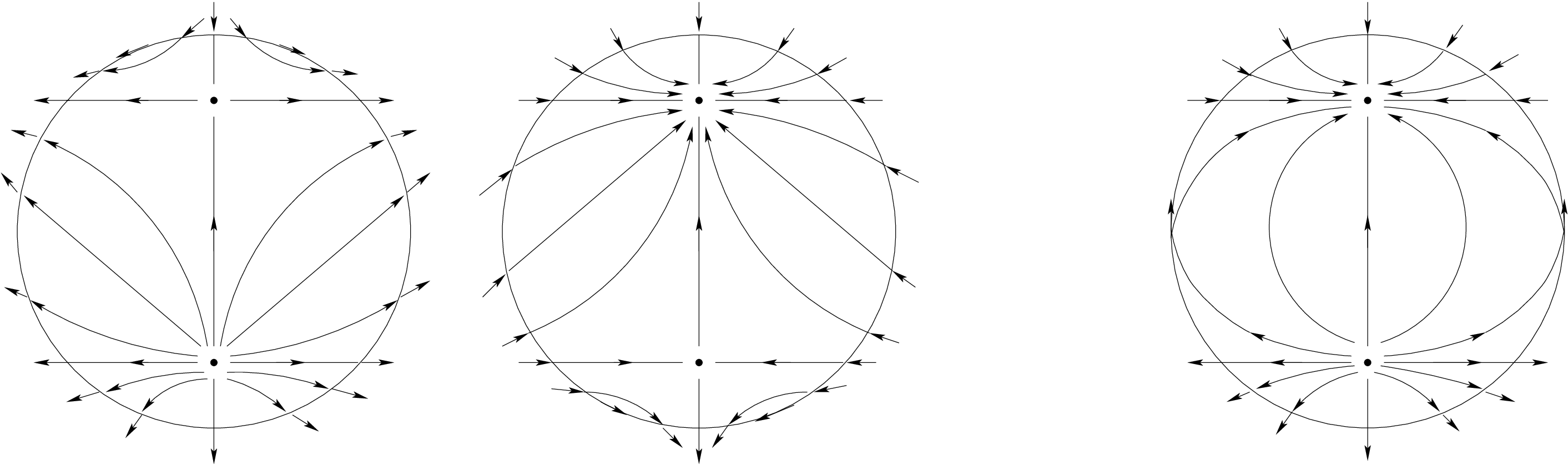}
    \put(10,-3){$xz$-plane}
    \put(40,-3){$yz$-plane}
    \put(27.5,-7){(a)}
    \put(86,-7){(b)}
    \end{overpic}
    \newline
    \newline
    \caption{The vector field $V$}
    \label{nbhd}
\end{figure}
 \begin{figure}[ht]
\centering  
  \begin{overpic}[scale=.25]{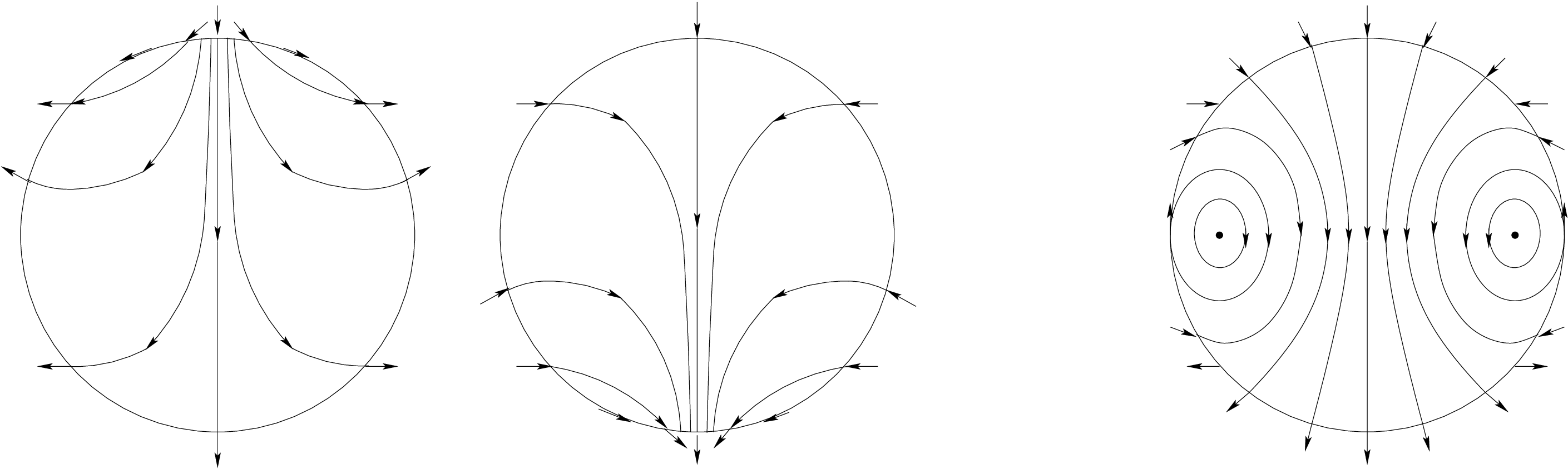}
    \put(10,-3){$xz$-plane}
    \put(40,-3){$yz$-plane}
    \put(27.5,-7){(a)}
    \put(86,-7){(b)}
    \end{overpic}
    \newline
    \newline
    \caption{The modification of $V$}
   \label{defn}
\end{figure}

Two generators $\bx$ and $\by$ of $\widehat{CF}(\Sigma,\aa,\bb,z)$ are said to be in the same spin-c structure, if the vector fields $V^\bx$ and $V^\by$ are homotopic in the complement of a 3-ball. For two such generators, one can define a relative grading $\gr(\bx,\by)\in\Z/d$, where $d$ is the divisibility of the first Chern class of the complex line bundle determined by a plane field transverse to $V^\bx$ with the induced orientation. For details, we refer the reader again to~\cite{OS1}. 

We recall that for a given spin-c structure, the space of corresponding homotopy classes of nonvanishing vector fields is an affine space over $\Z/d$, for an appropriate $d$ as above. The main theorem of~\cite{hr} says, in particular, that $\gr$ defines an absolute grading in $\widehat{HF}(Y)$, i.e., if $\bx$ and $\by$ are generators of $\widehat{HF}(Y)$ in the same spin-c structure, then $\gr(\bx,\by)=\gr(\bx)-\gr(\by)\in\Z/d$.

Now, for an intersection point $\bx$ and $n\in\Z$, we define $\gr([\bx,n])=\gr(\bx)+2n$. The inclusion $\iota:\widehat{CF}(\Sigma,\aa,\bb,z)\hookrightarrow CF^+(\Sigma,\aa,\bb,z)$ mapping $x\mapsto[x,0]$ induces the map $\iota_*$ in \eqref{eq:comm}.
It follows from the definition that this map preserves the absolute grading.
 
 \subsection{Heegaard Floer homology and open book decompositions}\label{sec:hfobd}
In this subsection, we will recall how to define the map $\psi'$ and we will show that it preserves the absolute grading.

An {\em open book} is a pair $(S,\varphi)$, where $S$ is a compact oriented surface with boundary and $\varphi$ is a diffeomorphism of $S$ which is the identity on $\partial S$. We will always assume that $\partial S$ is connected. We can construct a topological manifold by considering $S\times[0,1]/\sim$, where $(x,1)\sim(\varphi(x),0)$ for every $x\in S$ and $(x,t)\sim (x,t')$ for all $x\in\partial S$ and $t,t'\in[0,1]$. Given an open book $(S,\varphi)$, let $\bar{S}$ be the surface obtained by gluing an annulus to $S$ and let $\bar{\varphi}$ be a diffeomorphism of $\bar{S}$ obtained by extending $\varphi$ such that $\bar{\varphi}$ is close to the identity in the annulus and equal to the identity in a neighborhood of $\bar{S}$. The quotients obtained by considering $(S,\varphi)$ and $(\bar{S},\bar{\varphi})$ are homeomorphic and $\bar{S}\times[0,1]/\sim$ is actually a smooth manifold. We will say that $(S,\varphi)$ is an {\em open book decomposition} of $Y$ if $Y$ is diffeomorphic to $\bar{S}\times[0,1]/\sim$ where $(\bar{S},\bar{\varphi})$ is constructed from $(S,\varphi)$ as above. The knot $\partial \bar{S}\times\{t\}\subset Y$ is called the {\em binding} and for each $t$ the surface $\bar{S}\times\{t\}\subset Y$ is called a {\em page}.

Let $(S,\varphi)$ be an open book decomposition of $Y$. Up to an isotopy of $\varphi$ relative to $\partial S$, we can assume that in a neighborhood $\nu(\partial S)$ of $\partial S$, we have $\varphi(y,\theta)=(y,\theta-y)$ where we identify $\nu(\partial S)\cong\partial S\times (-\epsilon,0]$. Then $(S,\varphi)$ gives rise to a Heegaard decomposition as follows. The Heegaard surface is $\Sigma:=\bar{S}\times\{1/2\}\cup \bar{S}\times\{0\}$. If we denote the genus of $\bar{S}$ by $g$, then $\Sigma$ has genus $2g$. We choose a set of properly embedded arcs $\ba=\{a_1,\dots,a_{2g}\}$ of $\bar{S}$ such that $\bar{S}\setminus\bigcup_i a_i$ is homeomorphic to a disk. Let $\alpha_i$ be the union of two copies of $a_i$ in $\bar{S}\times\{0\}$ and $\bar{S}\times\{1/2\}$. And let $\beta_i=b_i\cup h(a_i\cap S)$, where $b_i$ is an arc in $\Sigma\setminus(S\times\{0\})$ which is isotopic to $\alpha_i\cap (\Sigma\setminus(S\times\{0\}))$, extends $h(a_i\cap S)$ to a smooth curve in $\Sigma$ and has exactly one intersection with $\alpha_i$ in the interior of $\Sigma\setminus(S\times\{0\})$, see Figure~\ref{fighf}(\subref{hfobd1}). Hence $(\Sigma,\aa,\bb)$ is a Heegaard diagram for $Y$. So $(\Sigma,\bb,\aa)$ is a Heegaard diagram\footnote{This construction is slightly more complicated than that in \cite[\S2.1]{cgh1} and it is not necessary for defining $\widehat{CF}(S,\ba,\varphi(\ba))$, but it will make it easier to choose an appropriate representative of $\gr(\bx)$ in \S\ref{sec:choice}.} for $-Y$.

For each $i$, the circle $\alpha_i$ intersects $\beta_i$ in $\Sigma\setminus(\text{int}(S)\times\{0\})$ at three points. We label them $y_i, y_i',y_i''$, as in Figure~\ref{fighf}(\subref{hfobd1}). We fix a basepoint $z\in S\times\{1/2\}\subset\Sigma$ away from neighborhoods of $\alpha_i\cap (S\times\{1/2\})$. One defines $\widehat{CF}'(S,\ba,\varphi(\ba))$ to be the subcomplex of $\widehat{CF}(\Sigma,\bb,\aa,z)$ generated by $2g$-tuples of intersection points contained in $S\times\{0\}$. One also defines $\widehat{CF}(S,\ba,\varphi(\ba))$ to be the quotient $\widehat{CF}'(S,\ba,\varphi(\ba))/\sim$, where two $2g$-tuples of intersection points in $S\times\{0\}$ are equivalent if they differ by substituting $y_i$ by $y_i'$. There is an induced differential on $\widehat{CF}(S,\ba,\varphi(\ba))$ and the inclusion map induces a map $\widehat{CF}(S,\ba,\varphi(\ba))\to \widehat{CF}(\Sigma,\bb,\aa,z)$ which is an isomorphism in homology by~\cite[Theorem 4.9.4]{cgh1}. The absolute grading on $\widehat{CF}(\Sigma,\bb,\aa,z)$ clearly restricts to an absolute grading of $\widehat{CF}'(S,\ba,\varphi(\ba))$. Let $\bx$ be a generator of $\widehat{CF}'(S,\ba,\varphi(\ba))$ containing $y_i$. Then \[\gr\left(\bx,\bx\setminus\{y_i\}\cup\{y_i'\}\right)=0.\] So absolute grading on the complex $\widehat{CF}(S,\ba,\varphi(\ba))$ is well-defined. Moreover, by definition, the map $\widehat{CF}'(S,\ba,\varphi(\ba))\to\widehat{CF}(\Sigma,\bb,\aa,z)$ preserves the absolute grading. Therefore the isomorphism
\begin{equation}
\psi':\widehat{HF}(\Sigma,\bb,\aa,z)\to \widehat{HF}(S,\ba,\varphi(\ba))\label{eq:2}
\end{equation}
preserves the absolute grading.

\begin{figure}[t]
 \centering
\begin{minipage}[b]{0.4\textwidth}
\def\svgwidth{\textwidth}
 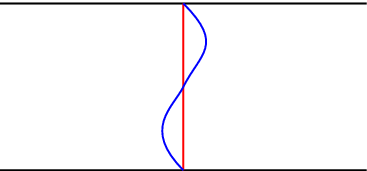
\vspace{0.2cm}
\subcaption{}
\label{hfobd1}
\end{minipage}\qquad\qquad
\begin{minipage}[b]{0.4\textwidth}
\def\svgwidth{\textwidth}
 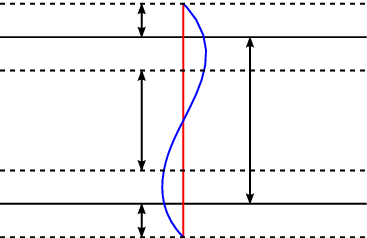
 \vspace{0.1cm}
\subcaption{}
\label{hfobd2}
\end{minipage}
\caption{A neighborhood of the arcs $a_i$}\label{fighf}
\end{figure}

\subsection{The grading on embedded contact homology}\label{abs:ech}
We will now recall the definition of the ECH chain complex and its absolute grading.
Let $Y$ be a closed, oriented three-manifold, let $\lambda$ be a nondegenerate contact form on $Y$ and let $\xi=\ker(\lambda)$. The ECH chain complex $ECC(Y,\lambda)$ is generated by finite orbit sets $\{(\gamma_i,m_i)\}$, where $\gamma_i$ are distinct single orbits of the Reeb vector field associated to $\lambda$, the numbers $m_i$ are positive integers, and $m_1=1$ whenever $\gamma_i$ is hyperbolic. After some extra choices, one can define a differential on $ECC(Y,\lambda)$ that squares to $0$. Its homology is independent of these choices and even of the contact form and is denoted by $ECH(Y)$. For the details of this construction and the invariance, we refer the reader to \cite{hutcast}.

The absolute grading on ECH is defined as follows. Let $\gamma=\{(\gamma_i,m_i)\}$ be an orbit set. The absolute grading $\gr(\gamma)$ is the homotopy class of the vector field obtained by modifying the Reeb vector field in disjoint neighborhoods of the Reeb orbits $\gamma_i$, as follows. For each $i$, fix a small tubular neighborhood of $\gamma_i$ and choose a braid $\zeta_i$ with $m_i$ strands in that neighborhood. Let $L$ be the union of the braids $\zeta_i$. A trivialization $\tau_i$ of $\xi$ over each $\gamma_i$, induces a framing on each $\zeta_i$. Let $\tau$ denote this framing on $L$. Now, for each component $K$ of $L$, let $N_K$ denote a small neighborhood of $K$ in $Y$. We can choose these neighborhoods so that $N_K$ and $N_{K'}$ do not intersect for different components $K$ and $K'$. The framing on $K$ induces a diffeomorphism $\phi_K:N_K\to S^1\times D^2$ and a trivialization of $TN_K$, identifying $\xi=\{0\}\oplus\R^2$ and $R=(1,0,0)$. Using the previous identifications, one can define a vector field $P$ on $N_K$ as
\begin{align*}
 P:S^1\times D^2&\to \R\oplus\R^2\\
 (t,re^{i\theta})&\mapsto(-\cos(\pi r),\sin(\pi r)e^{-i\theta}).
\end{align*}
One now constructs a vector field on $Y$ by defining it to be given by $P$ in each neighborhood $N_K$ and to equal the Reeb vector field in the complement of the union of the neighborhoods $N_K$. Let $P_\tau (L)$ be the homotopy class of this vector field. Now define
\begin{align}
 \gr(\gamma)=P_\tau(L)-\sum_i w_{\tau_i}(\zeta_i)+CZ_{\tau}^I(\gamma),\label{eq:echind}
\end{align}
Here $w_{\tau_i}(\zeta_i)$ denotes the writhe of $\zeta_i$ with respect to $\tau_i$ and $$CZ_{\tau}^I(\gamma)=\sum_i\sum_{k=1}^{m_i} CZ_{\tau}(\gamma_i^k).$$
One can check that $\gr(\gamma)$ does not depend on the choice of $\tau$ or $L$. In~\cite{ir}, Hutchings proved that $\gr$ is an absolute grading on ECH, i.e.,
that if $\gamma$ and $\sigma$ are orbit sets with $[\gamma]=[\sigma]\in H_1(Y)$ then
$$\gr(\gamma)-\gr(\sigma)=I(\gamma,\sigma)\in\Z/d,$$
for an appropriate $d$ depending on $\ker(\lambda)$ and $[\gamma]$. Here $I(\gamma,\sigma)$ denotes the relative grading on ECH, i.e., the ECH index whose definition we shall not need to use.


\subsection{The module $ECC_{2g}(N,\lambda)$}\label{sec:echobd}
We will now recall the definition of $ECC_{2g}(N,\lambda)$ and explain the absolute grading on it.

Let $(S,\varphi)$ be an open book decomposition of $Y$ and let $\lambda$ be a contact form on $Y$ which is {\em adapted} to $(S,\varphi)$, i.e., the Reeb vector field $R_{\lambda}$ is a positively transverse to the interior of the pages and positively tangent to the binding.  As in \S\ref{sec:hfobd}, we assume that $\varphi$ satisfies $\varphi(y,\theta)=(y,\theta-y)$ in a neighborhood of $\partial S$. It follows from \cite[Lemma 2.1.1]{cgh1} that $\lambda$ and $\varphi$ can be chosen so that $\varphi$ is the return map of the Reeb vector field on $S\times\{0\}$. We recall from our construction in \S\ref{sec:hfobd} that for each $t$ the surface $S\times\{t\}$ is a strict subset of a page. Let $N$ be the mapping torus of $\varphi$. Then we can write $Y=N\cup(S^1\times D^2)$. The torus $\partial N$ is foliated by Reeb orbits. Up to a small isotopy of $\lambda$, we can assume that all the Reeb orbits in the complement of $\partial N$ are nondegenerate and that $\partial N$ is a negative Morse-Bott torus. Folloing \cite{cgh1}, we define $ECC_{2g}(N,\lambda)$ to be the $\mathbb{Z}/2$ vector space generated by orbit sets constructed from Reeb orbits in $\text{int}(N)$ and two fixed orbits $\{e,h\}$ on $\partial N$, and whose total homology class intersects a page exactly $2g$ times. Here $e$ and $h$ play the roles of an elliptic and a hyperbolic orbit, respectively. The construction in \S\ref{abs:ech} still works even though $\lambda$ is degenerate. So we obtain an absolute grading on $ECC_{2g}(N,\lambda)$ taking values on $\Vect(Y)$.

 
\section{The main isomorphism}\label{sec:thm}
In this section, we will prove the main part of Theorem \ref{mainthm}, namely that the map $\widetilde{\Phi}$ preserves the absolute grading.

\subsection{The construction of $\widetilde{\Phi}$}\label{sec:tildephi}
We now recall the construction of the map $\widetilde{\Phi}$ on the chain level $$\widetilde{\Phi}:\widehat{CF}(S,\ba,\varphi(\ba))\to ECC_{2g}(N,\lambda).$$ This map is defined by counting rigid holomorphic curves with an ECH-type index equal to 0. We now review the relevant moduli spaces and this ECH-type index.

Throughout this section we fix an open book decomposition $(S,\varphi)$ of $Y$ satisfying the conditions given in \S\ref{sec:hfobd} and we let $N$ be the mapping torus of $\varphi$. We denote by $g$ the genus of $S$ and we let $\lambda$ be a contact form on $Y$ which is adapted to $(S,\varphi)$. In order to prove that $\widetilde{\Phi}$ is an isomorphism, it is necessary to make a more specific choice of $\lambda$ as it is done in~\cite[\S3]{cgh1}, but this particular choice does not affect the absolute grading by Lemma~\ref{lem:cob}.

Let $\pi:\R\times N\to \R\times \R/\Z$ be the map $(s,x,t)\mapsto(s,t)$ and let $B:=(\R\times S^1)\setminus B^c$, where $B^c=(0,\infty)\times(1/2,1)$. We also round the corners of $B$. Now define $W=\pi^{-1}(B)$ and $\Omega=ds\wedge dt+\omega$, where $\omega$ is a certain area form on $S$. Then $(W,\Omega)$ is a symplectic manifold with boundary. It has a positive end, which is diffeomorphic to $S\times[0,1/2]$ and a negative end, which is diffeomorphic to $N$. The map $\pi$ restricts to a symplectic fibration $\pi_{B}:(W,\Omega)\to(B,ds\wedge dt)$ which admits a symplectic connection whose horizontal space is spanned by $\{\partial/\partial s,\partial/\partial t\}$. Now if we take a copy of $\ba=(a_1,\dots,a_{2g})$ on the fiber $\pi_{B}^{-1}(1,1/2)$ and take its symplectic parallel transport along $\partial B$, we obtain a Lagrangian submanifold of $(W,\Omega)$, which is denoted by $L_{\ba}$. For each $a_i\subset \ba$ we denote by $L_{a_i}$ the corresponding component of $L_{\ba}$.

We will consider $J$-holomorphic maps $u:(\dot{F},j)\to(W,J)$ where $(\dot{F},j)$ is a Riemann surface with boundary and punctures, both in the interior and on the boundary. A puncture $p$ is said to be positive or negative if the $s$-coordinate of $u(x)$ converges to $\infty$ or $-\infty$, respectively, as $x\to p$. Now to each generator $\bx$ of $\widehat{CF}(S,\ba,\varphi(\ba))$ we can associate a subset of $S\times[0,1/2]$ given by the union of $x_i\times[0,1/2]$, for all $x_i\in\bx$. Given $\bx$, an orbit set $\gamma=\{(\gamma_i,m_i)\}$ in $ECC_{2g}(N,\lambda)$ and an admissible almost-complex structure $J$, one defines $\mathcal{M}_{J}(\bx,\gamma)$ to be the moduli space of immersed $J$-holomorphic maps $u:(\dot{F},j)\to(W,J)$ satisfying the following conditions:
\begin{enumerate}
 \item[(a)] $u(\partial \dot{F})\subset L_{\ba}$ and each component of $\partial\dot{F}$ is mapped to a different $L_{a_i}$.
 \item[(b)] The boundary punctures are positive and the interior punctures are negative. 
 \item[(c)] At each boundary puncture, $u$ converges to a different chord $x_i\times[0,1/2]$ and every chord $x_i\times[0,1/2]$ is such an end of $u$.
 \item[(d)] At an interior puncture, $u$ converges to an orbit $\gamma_i$ with some multiplicity. For each $i$, the total multiplicity of all ends converging to $\gamma_i$ is $m_i$.
 \item[(e)] The energy of $u$ is bounded.
\end{enumerate}

Let $\overline{W}$ denote the compactification of $W\subset\R\times N$ obtained by compactifying $\R$ to $\R\cup\{-\infty,\infty\}$. A continuous map $u:\dot{F}\to W$ satifying (a)--(d) above can be compatified to a map $\bar{u}:\overline{F}\to \overline{W}$ mapping $\partial \overline{F}$ to \[L_{\bx,\gamma}:=L_{\ba}\cup\left(\{\infty\}\times\bx\times[0,1/2]\right)\cup\left(\{-\infty\}\times
\gamma\right).\] Two such maps $u$, $v$ are said to be homologous if the images of $\bar{u}$ and $\bar{v}$ are homologous in $H_2(\overline{W},L_{\bx,\gamma})$. Let $H_2(W,\bx,\gamma)$ denote the set of homology classes of such maps $u:\dot{F}\to W$.

For a homology class $A\in H_2(W,\bx,\gamma)$, one defines its ECH-index $I(A)$ as follows. Let $u:\dot{F}\to W$ be a continuous map satifying (a)--(d) above such that $[u]=A$ and let $\bar{u}:\overline{F}\to\overline{W}$ be its compactification. Now note that one can view $TS$ as a sub-bundle of $T\overline{W}$. We choose an orientation of the arcs $a_i$, which gives rise to a nonvanishing vector field along each $a_i$. This vector field induces a trivialization $\tau$ of $TS$ along $L_{\ba}\subset \overline{W}$. We extend this trivialization arbitrarily along $\{\infty\}\times\bx\times[0,1/2]$ and along $\{-\infty\}\times\gamma$. Let $c_{\tau}(A)$ denote the first Chern class of $\bar{u}^*TS$ relative to $\tau$. Now let $C_1$ and $C_2$ be distinct embedded surfaces in $\overline{W}$ given by pushing $\bar{u}(\overline{F})$ off along vectors field which are transverse to it and trivial with respect to $\tau$ along the boundary. For more details see~\cite[\S4]{cgh1}. Then $Q_{\tau}(A)$ is defined to be the signed count of intersections of $C_1$ and $C_2$. Now let $\mathcal{L}_0$ be a real, rank one subbundle of $TS$ along $\bx\times[0,1/2]$ defined as follows. At $\bx\times\{0\}$, let $\mathcal{L}_0=T\varphi(\ba)$ and at $\bx\times\{1/2\}$, let $\mathcal{L}_0=T\ba$ in $TS$. Then $\mathcal{L}_0$ is defined by rotating counterclockwise by the minimum possible amount as we travel along $\bx\times[0,1/2]$. One defines $\mu_\tau(\bx)$ to be the sum of the Maslov indices of $\mathcal{L}_0$ along each $x_i\times[0,1/2]$ with respect to $\tau$. The ECH-index is defined as
\begin{equation*}
I(A)=c_{\tau}(A)+Q_\tau(A)+\mu_\tau(\bx)-CZ_{\tau}^I(\gamma)-2g.
\end{equation*}

Now $\widetilde{\Phi}(\bx)$ is defined as follows. The coefficient of an orbit set $\gamma$ in $\widetilde{\Phi}(\bx)$ is the count of maps $u$ in $\mathcal{M}_{J}(\bx,\gamma)$ with $I([u])=0$. As explained in~\cite{cgh1}, for a generic $J$ this count is finite and all the maps that are counted are embeddings.

\subsection{The choice of an appropriate representative of $\gr(\bx)$}\label{sec:choice}

Let $\bx$ be a generator of $\widehat{CF}'(S,\ba,\varphi(\ba))$. We will now explain how to choose a vector field in the equivalence class $\gr(\bx)$ that coincides with the Reeb vector field of a contact form on $Y$ in the complement of a small set in preparation for Proposition \ref{prop:indiso}.

Let $(S,\varphi)$ be an open book decomposition of $Y$ and let $\lambda$ be a contact form on $Y$ which is {\em adapted} to $(S,\varphi)$ satisfying the conditions of \S\ref{sec:echobd}. For each $i=1,\dots,2g$, let $A_i^1$ be a small closed neighborhood of $\alpha_i$ in $\bar{S}$ and let $A_i^2\supset A_i^1$ be a small thickening of it in $\bar{S}$, as in Figure~\ref{fighf}(\subref{hfobd2}). The open book decomposition $(S,\varphi)$ gives rise to a Heegaard diagram $(\Sigma,\bb,\aa,z)$ as in \S\ref{sec:hfobd}. Let $(f,V)$ be a pair which is compatible with $(\Sigma,\bb,\aa,z)$. In what follows when we take the cartesian product of a subset of $\bar{S}$ and an interval in $\R$, we will always take the quotient by the equivalence relation generated by $(x,1)\sim (\bar{\varphi}(x),0)$ for all $x\in \bar{S}$ and $(x,t)\sim (x,t')$ for all $x\in\partial \bar{S}$. So we can see these products as subsets of $Y$. We can assume, without loss of generality, that:
\begin{itemize}
\item The critical points of $f$ belong to $(\bar{S}\times\{1/4\})\cup (\bar{S}\times\{3/4\})$.
\item Every flow line corresponding to a point $y_i''$ as in Figure \ref{fighf}(\subref{hfobd1}) belongs to $A_i^2\times[1/4,3/4]$ and along this flow line $V=-R_\lambda$. Moreover $V$ is not a positive multiple of $-R_\lambda$ elsewhere in $A_i^2\times[1/4,3/4]$.
\item The flow line $\gamma_0$ corresponding to $z$ belongs to $(\bar{S}\setminus\bigcup_iA_i^2) \times[1/4,3/4]$.
\end{itemize}

For $j=1,2$ we let
\[M^j=\nu\left(\left(\bar{S}\setminus\bigcup_{i=1}^{2g}A_i^j\right)\times[1/4,3/4]
\right)\subset Y.\]
Here $\nu(\cdot)$ denotes a small neighborhood in $Y$. We observe that $M^1$ and $M^2$ are 3-balls and $M^1\supset M^2$. See Figure \ref{fig:gamma} for a picture of $M^1$ and $M^2$ in a neighborhood of $a_i\times\{1/2\}$. Let $\bx=(x_1,\dots,x_{2g})$ be a generator of $\widehat{CF}(\Sigma,\bb,\aa,z)$. It follows from the conditions on $(f,V)$ above that we can choose small enough neighborhoods $\nu(\gamma_i)$ for $i=1,\dots,2g$ as in \S\ref{sec:absgrad} which do not intersect $M^1$. We can also assume $\nu(\gamma_0)=M^1$ since $M^1$ contains no index one or two critical points. Under this identification, we require $(\bar{S}\times\{1/2\})\cap M^1$ to be contained in the $xy$-plane. We can now modify $V|_{M^1}$ and define $V^\bx|_{M^1}$ as in \S\ref{sec:absgrad}. Recall that $V^\bx$ is transverse to the $xy$-plane except on a circle which we denote by $\Gamma$, see Figure \ref{defn}(b). Up to a homotopy, we can assume that
\[\Gamma\cap M^2=\partial\bar{S}\cap M^2.\]
Figure \ref{fig:gamma} shows $\Gamma$ in a neighborhood of $a_i\times\{1/2\}$.
So $V^{\bx}$ is positively tangent to the binding in $M^2$ and $V^\bx$ is transverse to the interior of the pages in $M^2$.

\begin{figure}[t]
 \centering
\def\svgwidth{0.4\textwidth}
 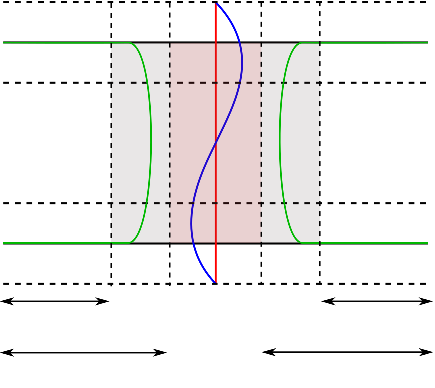
\caption{The curve $\Gamma$}\label{fig:gamma}
\end{figure}

We now let \[Y^0=Y\setminus \nu\left(\bar{S}\times[1/4,3/4]\right).\]
The neighborhood above is chosen to be sufficiently small so that the complement of $Y^0\cup M^2$ is a neighborhood of $\bigcup_i A_i^2\times[1/4,3/4]$.
In $Y^0$ the vector field $V$ is nonvanishing and positively transverse to the pages. So using the construction of the paragraph above, we can assume that $V^{\bx}$ equals the Reeb vector field $R_{\lambda}$ in $Y^0\cup M^2$ except in the neighborhoods $\nu(\gamma_i)$ for $i=1,\dots,2g$. We identify each $\nu(\gamma_i)$ with a subset of $\R^2$ as in Figure \ref{nbhd}(a). We can also assume that $V^{\bx}=-R_\lambda$ along the $z$-axis in $\nu(\gamma_i)$ and that $V^{\bx}\neq -R_\lambda$ in the complement of the $z$-axis in $\nu(\gamma_i)$ for $i=1,\dots,2g$, c.f. Figure \ref{defn}(a).

\subsection{The map $\widetilde{\Phi}$ preserves the grading}

We will now prove a proposition, which is the main ingredient of the proof of Theorem \ref{mainthm}.

\begin{proposition}\label{prop:indiso}
 Let $A\in H_2(W,\bx,\gamma)$, where $\bx$ and $\gamma$ are generators of $\widehat{CF}(S,\ba,\varphi(\ba))$ and $ECC_{2g}(N,\lambda)$, respectively. Then
 \begin{equation}
  \gr(\bx)-\gr(\gamma)=I(A).\label{eq:prop}
 \end{equation}
\end{proposition}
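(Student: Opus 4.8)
The plan is to compare the two absolute gradings by building, from a fixed map $u$ representing the class $A$, a single vector field on $Y$ that simultaneously computes $\gr(\bx)$ (on the Heegaard Floer side) and $\gr(\gamma)$ (on the ECH side), and then to identify the homotopy-class difference with the ECH-index $I(A)$ via the relative Chern class, relative intersection and Maslov/Conley-Zehnder contributions. Concretely, I would first use the construction of \S\ref{sec:hfobd}: for the generator $\bx$, the grading $\gr(\bx)$ is represented by the vector field $V^{\bx}$ obtained from $R_\lambda$ by modifying it in the neighborhoods $M_0\cup M_1$ of the flow lines through the $x_i$ and through $z$, so $V^{\bx}$ already agrees with $R_\lambda$ outside a small set. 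On the ECH side, $\gr(\gamma)$ is represented by the vector field built from $R_\lambda$ by the $P_\tau(L)$ modification near a braid $L$ around the orbits $\gamma_i$. Since both representatives equal $R_\lambda$ away from small neighborhoods, the difference $\gr(\bx)-\gr(\gamma)\in\Vect(Y)$ is localized and can be computed by an obstruction-theoretic count over a $4$-manifold cobounding the relevant pieces.

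The key computational device is the map $u:\dot F\to W\subset\R\times N$ and its compactification $\bar u:\overline F\to\overline W$, whose boundary lies on $L_{\ba}\cup(\bx\times\{\infty\})\cup(\gamma\times\{-\infty\})$; the class $A=[u]$ interpolates between the chords $\bx$ at the positive end and the orbit set $\gamma$ at the negative end. I would use $\bar u$ to produce a homotopy, through nonvanishing vector fields on $Y\times[0,1]$ (or on a suitable cobordism), between $V^{\bx}$ pushed to one end and the $P_\tau(L)$-vector field at the other end, with the tangent $2$-plane distribution along the way modeled on $\xi$ restricted to the image of $u$ together with the Lagrangian boundary condition $L_{\ba}$ and $TS$ along $\bx$. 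The homotopy obstruction is then a relative Euler number, and unwinding it gives exactly $c_\tau(A)+Q_\tau(A)+\mu_\tau(\bx)-CZ^I_\tau(\gamma)-2g$: the $c_\tau(A)$ term comes from the relative first Chern class of $\bar u^*TS$ (equivalently of $\xi$) against the trivialization $\tau$ induced by the chosen orientations of the arcs $a_i$; the $Q_\tau(A)$ term appears as the self-intersection correction when one resolves the vector field along the image surface; the $\mu_\tau(\bx)$ and $CZ^I_\tau(\gamma)$ terms record the rotation of the Lagrangian line field $\mathcal L_0$ along $\bx\times[0,1/2]$ and of the linearized Reeb flow along $\gamma$, respectively, precisely as in the Heegaard Floer index formula of~\cite{hr} and the ECH index formula~\eqref{eq:echind}; and the $-2g$ is the bookkeeping constant from the $2g$ chords in $\bx$ and the $2g$ arcs $a_i$ (matching the normalization $\widehat{CF}(S,\ba,\varphi(\ba))\to ECH_{2g}(N,\lambda)$). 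I would carry this out by first fixing $\tau$, then computing each of the four local contributions separately and checking the bundle-theoretic identities relating the nonvanishing-vector-field obstruction to $c_\tau+Q_\tau$, and finally assembling them.

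A cleaner organizational route, which I would pursue in parallel, is to reduce to already-established index formulas rather than redoing obstruction theory from scratch. By Theorem~\ref{hf:absgrad}(a) and the ECH analogue (the Proposition in \S\ref{abs:ech}), both sides of~\eqref{eq:prop} are relative gradings once one fixes the $\Spinc$ structure, and one knows a priori that $I(A)$ agrees with the \emph{relative} Heegaard-Floer-to-ECH index difference (this is essentially the content of the index identifications in~\cite{cgh1,cgh2}, where $\widetilde\Phi$ is shown to be a chain map of the appropriate degree). So it suffices to pin down the absolute constant: verify~\eqref{eq:prop} in a single convenient configuration — e.g. taking $\gamma$ to be a distinguished orbit set near $\partial N$ built from $e$, $h$, $e'$, $h'$ and using Lemma~\ref{lem:greh} together with property (b) of Theorem~\ref{hf:absgrad} (the contact class maps to the Reeb field) — and then propagate by the relative statement. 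The main obstacle, I expect, is precisely this matching of absolute additive constants: keeping careful track of the $-2g$ shift, the orientation conventions on the arcs $a_i$ (which fix $\tau$), and the interaction between the Lagrangian boundary corrections in $Q_\tau$ and the Maslov term $\mu_\tau(\bx)$, so that the four contributions add up with the right signs to $c_\tau(A)+Q_\tau(A)+\mu_\tau(\bx)-CZ^I_\tau(\gamma)-2g$ and not to that quantity shifted by a constant. The holomorphic-curve input is only used to guarantee that such a class $A$ and representative $\bar u$ exist and behave well at the ends; the heart of the proof is the topological identification of the vector-field homotopy obstruction with the ECH-index.
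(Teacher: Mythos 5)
Your first route is, in outline, the paper's strategy: represent $\gr(\bx)$ by the vector field $V^{\bx}$ of \S\ref{sec:hfobd} (which, after Lemma~\ref{lem:iso}, agrees with $R_\lambda$ outside a small set), represent $\gr(\gamma)$ by the $P_\tau(L)$-modification of $R_\lambda$, and use the compactified curve $\bar u$ as a cobordism over which the difference of homotopy classes is computed. But the decisive step is exactly the one you assert rather than prove: ``the homotopy obstruction is a relative Euler number, and unwinding it gives exactly $c_\tau(A)+Q_\tau(A)+\mu_\tau(\bx)-CZ^I_\tau(\gamma)-2g$.'' Nothing in your plan explains the mechanism by which a difference of homotopy classes of nonvanishing vector fields becomes such a sum. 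The paper supplies this mechanism: a relative Pontryagin--Thom lemma for \emph{immersed} cobordisms (Lemma~\ref{lem:pt}, $[v]-[w]=c_1(NC,\nu)+2\delta(C)$), applied to the preimage links of a regular value --- which must be identified concretely as the link $L_{\bx}$ of flow lines through the $x_i$ \emph{and} the points $y_i''$, and the braid $L$ around the $\gamma_i$ --- together with a framing comparison $\nu-\tau$ along $L_{\bx}$ that produces $\mu_\tau(\bx)-2g$, a push-off computation $c_1(NC,\tau)=C\cdot C'-2\delta(C)$, and the identity $C\cdot C'=Q_\tau(A)+\sum_i w_{\tau}(\zeta_i)$, after which the writhe cancels against the writhe term in the ECH grading formula~\eqref{eq:echind}. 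In particular, $Q_\tau(A)$ and the writhes are not ``local contributions'' of an Euler-class count in any obvious sense; they enter only through this specific chain of identities, and your proposal gives no substitute for it. Your attribution of the $-2g$ to the $2g$ chords is also only half the story: in the actual framing comparison it arises from the half-turn discrepancies along both $\gamma_{\mathbf{y}''}$ and $\gamma_{\bx}$, and getting this constant (and the signs) right is the content of the proof, not an afterthought.

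Your fallback route (deduce the relative statement from known index identifications and pin the additive constant on one example) does not close this gap. The ``relative statement'' you invoke --- that $I(A)$ computes the difference of the two absolute gradings up to a constant --- is essentially the proposition itself; \cite{cgh1,cgh2} give $I$ as a relative grading on each chain complex but do not compare it to the vector-field gradings of~\cite{hr} and~\cite{ir} across the cobordism, so there is nothing to propagate from. Moreover, classes $A\in H_2(W,\bx,\gamma)$ only connect generators inducing the same $\Spinc$ structure (indeed the same homology class $\Gamma$ of $\gamma$), so an anchor computation with $\gamma$ built from $e,h,e',h'$ near $\partial N$ would at best fix the constant for the particular classes those orbit sets represent, not for all generators of $ECC_{2g}(N,\lambda)$; and since the grading takes values in the $\Z/d$-torsor $\pi^{-1}(\mathfrak s)\subset\Vect(Y)$, a ``constant'' would have to be controlled separately in each such torsor. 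So the anchor-and-propagate scheme would need either one verified configuration in every $\Spinc$ structure or an a priori naturality argument you have not provided; the direct computation via Lemma~\ref{lem:pt} avoids this entirely.
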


We first recall a relative version of the Pontryagin-Thom construction. Let $v$ and $w$ be nonvanishing vector fields on a closed and oriented three-manifold $Y$. Assume that $v$ and $w$ coincide in $Y\setminus U$, where $U$ is an open set in $Y$. Let $\tau$ be a trivialization of $TY|_U$ and let $p\in S^2$ be a regular value of both $v$ and $w$ seen as maps $U\to S^2$. The one-manifolds $L_v:=v^{-1}(p)$ and $L_w:=w^{-1}(p)$ inherit framings by considering the isomorphisms of their normal bundles with $T_pS^2$ given by $v_*$ and $w_*$ along $L_v$ and $L_w$, respectively. Now if $L_v$ and $L_w$ are contained in the interior of $U$ and are homologous in $U$, there is a link cobordism $C\subset U\times[0,1]$ from $L_v$ to $L_w$. That is, $C$ is a surface such that $\partial C=(L_v\times\{1\})\cup (-L_w\times\{0\})$. The framings on $L_v$ and $L_w$ induce a framing on $C$ along $\partial C$ which we denote by $\nu$. The following lemma is a consequence of the classical Pontryagin-Thom construction and \cite[Lemma 2.3]{hr}.
\begin{lemma}\label{lem:pt}
Let $v$ and $w$ be nonvanishing vector fields and $L_v$ and $L_w$ the links as above. Let $C$ be an immersed cobordism from $L_v$ to $L_w$ and let $\delta(C)$ denote the number of self-intersections of $C$. Let $\tau$ denote the framing on $C$ along $\partial C$ which is induced by the framings on $L_v$ and $L_w$. Then
$$[v]-[w]=c_1(NC,\tau)+2\delta(C).$$
\end{lemma}

\begin{proof}[Proof of Proposition \ref{prop:indiso}]
In order to make it easier to visualize the construction below, we can apply a diffeomorphism of the base $\R/\Z$ of the fibration $\pi$ in \S\ref{sec:tildephi} and change $W$ apropriately so that $\pi(\partial W)=[\epsilon,1-\epsilon]\subset \R/\Z$, for small $\epsilon>0$. This is equivalent to substituting $B^c$ by $(0,\infty)\times(\epsilon,1-\epsilon)$ in \S\ref{sec:tildephi}. 

Let $u:\dot{F}\to W$ be an immersion such that $[u]=A$ and let $\bar{u}:\overline{F}\to\overline{W}$ denote its continuous compactification. We note that by rounding the corners of $\overline{W}$, we obtain a trivial cobordism from $N$ to itself which we denote by $[0,1]\times N$. In particular, \[L_{\ba}\subset \{1\}\times S\times [\epsilon,1-\epsilon]\subset \{1\}\times N.\] We now consider the intersection of the smoothing of $\bar{u}$ with $[\delta,1]\times N$, for small $\delta>0$. We obtain an immersed surface $C$ whose boundary is the union of $\{1\}\times \left(\bx\times[-\epsilon,\epsilon]\right)$, a curve on $L_\ba$ which is transverse to $\{1\}\times S\times\{t\}$ for every $t$ and a link in $\{\delta\}\times N$ which is the union of braids about the Reeb orbits $\{\delta\}\times\gamma_i$ with $m_i$ strands. Here $\gamma=\{(\gamma_i,m_i)\}$.
Let $L\subset N$ be the union of these braids under the identification $N\cong\{\delta\}\times N$.

Let $M^2\subset Y$ and $V^{\bx}$ be as in \S\ref{sec:choice} and let $\widetilde{N}$ be a small open neighborhood of $N\cup (Y\setminus M^2)$. We choose a trivialization of $T\widetilde{N}$ as follows. We first orient the arcs $a_i\subset S$ so as to obtain a nonvanishing vector field on $S$ which is tangent to $a_i$. We can then extend this vector field to all of $S$. We choose a second vector field on $S$ such that these two vector fields form an oriented global frame of $S$. This frame induces a trivialization of $TS$ which gives rise to a trivialization of the pullback bundle of $TS$ over $[\epsilon,1-\epsilon]\times S$. We extend it arbitrarily to a trivialization of the pullback bundle of $TS$ to all of $N$ and we denote this trivialization by $\tau$. Finally we let $R_\lambda$ be the third vector field on $N$ obtaining thus a global frame of $N$. We now extend this frame to a global frame of $\widetilde{N}$ so that $R_\lambda$ is always the third vector field. This frame gives rise to a trivialization $T\widetilde{N}\cong \widetilde{N}\times\R^3$. Note that under this trivialization $R_\lambda$ is the constant vector field $(0,0,1)$.

Let $V_{\tau}^{L}$ be the vector field defined in \S\ref{abs:ech} whose homotopy class is $P_{\tau}(L)$. Then it follows from \S\ref{sec:choice} that we can assume that $V^{\bx}$ and $V_{\tau}^{L}$ coincide in $Y\setminus\widetilde{N}$.
We shall use Lemma~\ref{lem:pt}. We observe that $(V_{\tau}^{L})^{-1}(0,0,-1)=L$. The framing can be calculated by considering the preimage of a vector near $(0,0,-1)$. The corresponding link gives a framing of the normal bundle $NL\cong \xi|_L$ which coincides with $\tau$.

Let $L^{\bx}=(V^{\bx})^{-1}(0,0,-1)$. It follows from the construction of $V^\bx$ in \S\ref{sec:choice} that $L^\bx$ is a slight perturbation of \[\bigcup_i \gamma_{x_i}\cup\bigcup_i \gamma_{y_i''}.\] Here $\gamma_{x_i}$ denotes the flow line of $V$ going through $x_i\in\Sigma$. We note that $L^\bx$ is transverse to the pages. So $\tau$ induces a framing of $L^\bx$ as a link in $N$. Moreover $C\cap \{1\}\times N$ seen as a link in $N$ is isotopic to $L^\bx$ through links that are transverse to the pages. Therefore $(C\cap \{1\}\times N,\tau)$ is framed isotopic to $(L^\bx,\tau)$. By composing $C$ with this framed isotopy, we obtain an immersed cobordism $\widetilde{C}$ between $L$ and $L^\bx$.

Now let $\tau'$ denote the framings on $L^\bx$ and $L$ induced from the Pontryagin-Thom construction. It follows from Lemma \ref{lem:pt} that
\begin{equation}
\gr(\bx)-P_\tau(L)=c_1(N\widetilde{C},\tau')+2\delta(\widetilde{C}).\label{eq:dif1}
\end{equation}

We claim that
\begin{equation}
 c_1(N\widetilde{C},\tau')=c_1(N\widetilde{C},\tau)+\mu_{\tau}(\bx)-2g.\label{eq:mu}
\end{equation}
To prove the claim, we will compute the difference $c_1(N\widetilde{C},\tau')-c_1(N\widetilde{C},\tau)$. This difference is given by $\tau'|_{L^\bx}-\tau|_{L^\bx}$, since $\tau'|_L=\tau|_L$. We orient $L^\bx$ so that it intersects the pages positively, i.e., the orientation follows the flow of $V$ along $\gamma_{x_i}$ and of $-V$ along $\gamma_{y_i''}$. Under the trivialization $(\tau,R_\lambda)$, we have $L^\bx=(V^{\bx})^{-1}(0,0,-1)$.
The framing $\tau'|_{L^\bx}$ is determined by the projection of the vector field $\frac{d}{d\epsilon}|_{\epsilon=0}(V^\bx)^{-1}(\epsilon,0,-1)$ to $TS|_{L^\bx}$. Let $v_{\tau'}$ denote this projection. Let $v_{\tau}$ be the constant vector field $(1,0,0)$ of $T\tilde{N}$. So $v_\tau$ is tangent to $L_{\ba}$ along $\bigcup_i \gamma_{y_i''}$.  So the difference $\tau'|_{L^\bx}-\tau|_{L^\bx}$ is the signed count of turns of $v_{\tau'}$ with respect to $v_\tau$ as we travel along $L^\bx$. We observe that since $\epsilon$ is small, we can assume that $L_{\ba}$ is tangent to the unstable surfaces corresponding to each $\beta_i$ near $\{\epsilon\}\times S$ and to the stable surfaces corresponding to each $\alpha_i$ near $\{1-\epsilon\}\times S$. So the vector field $v_{\tau}$ rotates a quarter of a turn positively about $\gamma_{y_i''}$ for each $i$ with respect to a reference frame in which the stable and unstable surfaces of $V$ are contained in the coordinate axes of $\R^2$, c.f. \cite[\S2]{hr}. Hence $v_{\tau'}$ rotates a quarter of a turn negatively about $\gamma_{y_i''}$ with respect to the same reference frame. So along each $\gamma_{y_i''}$ we obtain a contribution of $-1/2$ to $\tau'|_{L^\bx}-\tau|_{L_\bx}$. Now we compute the difference $\tau'|_{L^\bx}-\tau|_{L^\bx}$ along each $\gamma_{x_i}$. If $v_{\tau}$ makes $1/2+n$ positive half-turns about $\gamma_{x_i}$, we obtain a contribution of $-1/2-n$ to $\tau'|_{L^\bx}-\tau|_{L^\bx}$. In that case, this component will contribute by $-n$ to $\mu_{\tau}(\bx)$. Since there are $2g$ segments $\gamma_{x_i}$ and $\gamma_{y_i''}$, the total difference $\tau'|_{L^\bx}-\tau|_{L^\bx}$ is $\mu_{\tau}(\bx)-2g$ and we have proven~\eqref{eq:mu}.

It remains to compute $c_1(N\widetilde{C},\tau)$. We first note that $c_1(N\widetilde{C},\tau)=c_1(NC,\tau)$ since $\widetilde{C}$ is obtained from $C$ by adding a trivial framed cobordism. We will now use a classical construction in topology. Consider a generic section of $NC$ which is trivial with respect to $\tau$ along $\partial C$. We move $C$ in the direction of this section and we obtain a surface $C'$ which intersects $C$ tranversely. Then
\begin{equation}c_1(NC,\tau)=C\cdot C'-2\delta(C),\label{eq:NC}\end{equation}                                      
where $C\cdot C'$ denotes the signed count of intersections of $C$ and $C'$. But these surfaces are not necessarily $\tau$-trivial. In fact, the linking number of $\partial C$ and $\partial C'$ is $-\sum_i w_{\tau}(\zeta_i)$ in $\{0\}\times \widetilde{N}$ and $0$ in $\{1\}\times \widetilde{N}$. Following a standard calculation in ECH, see e.g.~\cite[\S2.7]{ir}, we obtain \begin{equation}C\cdot C'=Q_{\tau}(A)+\sum_i w_{\tau}(\zeta_i).\label{eq:Q}\end{equation}

Combining~\eqref{eq:echind}, \eqref{eq:dif1}, \eqref{eq:mu}, \eqref{eq:NC} and \eqref{eq:Q}, we obtain~\eqref{eq:prop}.
\end{proof}

Now if $\gamma$ is a term in $\widetilde{\Phi}(\bx)$, it follows from Proposition \ref{prop:indiso} that $\gr(\bx)-\gr(\gamma)=0$. So $\widetilde{\Phi}$ preserves the grading on the chain level, and therefore the isomorphism $\widetilde{\Phi}:\widehat{HF}(S,\ba,\varphi(\ba))\to ECH_{2g}(N,\lambda)$ preserves the grading. 

%
%

\section{ECH and open book decompositions}\label{sec:det}
In this section, we will recall the definition of the map $\psi$ and we will prove that it preserves the absolute grading, which is the last step in the proof of Theorem \ref{mainthm}.

\subsection{The hat version of ECH}


The $U$ map is a degree $-2$ chain map $U:ECC(Y,\lambda)\to ECC(Y,\lambda)$. The chain complex $\widehat{ECC}(Y,\lambda)$ is defined to be the mapping cone of $U$. The homology of $\widehat{ECC}(Y,\lambda)$ is denoted by $\widehat{ECH}(Y,\lambda)$. Again, it follows from~\cite{e1} that the $U$ map in homology does not depend on any choices so we can write $\widehat{ECH}(Y)$. We obtain an exact triangle, as follows.
\begin{eqnarray}
\xymatrix{
ECH(Y)  \ar[rr]^{U} &  &  ECH(Y) \ar[dl] \\ \label{tri:ech}
& \ar[lu]  \widehat{ECH}(Y) &  }.
\end{eqnarray}
We define the absolute grading on $\widehat{ECC}(Y,\lambda,J)$ so that the map $\widehat{ECC}(Y,\lambda)\to ECC(Y,\lambda)$ has degree 0. Hence for $\rho\in\Vect(Y)$, we can write $\widehat{ECH}_{\rho}(Y)$.
We note that the map $ECH(Y,\lambda)\to\widehat{ECH}(Y,\lambda)$ has degree 1.

\subsection{Cobordism maps in ECH}\label{sec:cob}
In this subsection, we will show that the cobordisms maps in ECH defined by Hutchings and Taubes in~\cite{cc2} preserve the absolute grading. This fact will be used in the next subsection.

Let $\lambda$ be a contact form on $Y$. The symplectic action of an orbit set $\gamma=\{(m_i,\gamma_i)\}$ is defined to be $\mathcal{A}_{\lambda}(\gamma):=\sum_{i}m_i\int_{\gamma_i}\lambda$. For $L>0$, the filtered ECH chain complex $ECC^L(Y,\lambda)$ is defined to be the subcomplex of $ECC(Y,\lambda)$ generated by all orbit sets $\gamma$ with $\mathcal{A}_{\lambda}(\gamma)<L$. Since the differential decreases the action, the subgroup $ECC^L(Y,\lambda)$ is indeed a subcomplex. Its homology is denoted by $ECH^L(Y,\lambda)$ and it is independent of the almost-complex structure by~\cite[Theorem 1.3(a)]{cc2}.

For $i=1,2$, let $(Y_i,\lambda_i)$ be a 3-manifold with contact form $\lambda_i$. An exact symplectic cobordism from $(Y_1,\lambda_1)$ to $(Y_2,\lambda_2)$ is a pair $(W,d\lambda)$, where $W$ is a compact 4-manifold, $d\lambda$ is a symplectic form, $\partial W=Y_1\cup(-Y_2)$ and $\lambda|_{Y_i}=\lambda_i$ for $i=1,2$. According to~\cite[Theorem 1.9]{cc2}, such corbordisms induce maps
\begin{equation*}
 \Phi^L(X,\lambda):ECH^L(Y_1,\lambda_1)\to ECH^L(Y_2,\lambda_2).
\end{equation*}
The maps $\Phi^L$ are constructed by taking the composition of the corresponding map in Seiberg-Witten Floer homology and the isomorphism from ECH to Seiberg-Witten Floer homology.

\begin{lemma}\label{lem:cob}
 Let $([0,1]\times Y,d\lambda)$ be an exact cobordism from $(Y,\lambda_1)$ to $(Y,\lambda_0)$. Then, for every $L>0$, the map $\Phi^L([0,1]\times Y,\lambda)$ preserves the absolute grading, i.e. $\Phi^L([0,1]\times Y,\lambda)$ maps $ECH^L_\rho(Y_1,\lambda_1)$ to $ECH^L_\rho(Y_0,\lambda_0)$ for every $\rho\in\Vect(Y)$.
\end{lemma}
\begin{proof}
The maps $\Phi^L([0,1]\times Y,\lambda)$ are defined as a composition of maps
\begin{equation}
ECH^L(Y_1,\lambda_1)\to\widehat{HM}_L(Y,\lambda_1)\to\widehat{HM}_L(Y,\lambda_0)\to ECH^L(Y,\lambda_0).\label{eq:compiso}
\end{equation}
Here $\widehat{HM}_L(Y,\lambda_1)$ and $\widehat{HM}_L(Y,\lambda_0)$ are appropriate filtered Seiberg-Witten Floer cohomology groups, as explained in~\cite{cc2}. The second map in \eqref{eq:compiso} is a filtered version of the cobordism maps defined in~\cite[\S25]{km}. Now it follows from the definition of these maps that if an element of $\widehat{HM}_L(Y,\lambda_1)$ has grading $\rho_1=[v_1]\in\Vect(Y)$, then its image in $\widehat{HM}_L(Y,\lambda_2)$ is the sum of elements of (possibly different) gradings $\rho_0=[v_0]$ such that for each such $\rho_0$ there exists an almost-complex structure $J$ on $[0,1]\times Y$ satisfying $$v_i^\perp=T(\{i\}\times Y)\cap J(T(\{i\}\times Y)),\qquad i=0,1.$$
Now, for $t\in[0,1]$, we let $\xi_t=T(\{t\}\times Y)\cap J(T(\{t\}\times Y))$. Since $T(\{t\}\times Y)$ cannot be invariant under $J$, it follows that $\xi_t$ is a 2-plane field for every $t$. Therefore $\{\xi_t\}$ is a homotopy between $v_0^\perp$ and $v_1^\perp$. Hence $\rho_0=\rho_1$. So the second map in \eqref{eq:compiso} preserves the absolute grading.

Now, the first and third maps in \eqref{eq:compiso} preserve the grading by \cite{dan}. Therefore $\Phi^L([0,1]\times Y,\lambda)$ preserves the grading.
\end{proof}

\subsection{The map $\psi$}\label{sec:psi1}
Let $\lambda$ be a contact form adapted to the open book $(S,\varphi)$ with the compatibility conditions required in \S\ref{sec:echobd}. The map $\psi:ECH_{2g}(N,\lambda)\to \widehat{ECH}(Y)$ is defined to be the composition $\psi=\widehat{\Psi}_1\circ \widehat{\Psi}_2$ as follows.
\begin{equation*}
ECH_{2g}(N,\lambda)\xrightarrow{\quad\widehat{\Psi}_2\quad}  \widehat{ECH}(N,\partial N,\lambda) \xrightarrow{\quad\widehat{\Psi}_1\quad}  \widehat{ECH}(Y). \end{equation*}
We will now recall the definition of $\widehat{ECH}(N,\partial N,\lambda)$, show how to extend the absolute grading to it and prove that $\widehat{\Psi}_1$ and $\widehat{\Psi}_2$ preserve the grading.

We let $ECC(N,\lambda)$ denote the chain complex generated by orbit sets contructed from Reeb orbits in the interior of $N$ and the orbits $\{e,h\}$ where $e$ and $h$ are seen are elliptic and hyperbolic orbits, respectively. The differential counts Morse-Bott buildings of ECH index 1, as explained in \cite{cgh0}. Then $ECC_{2g}(N,\lambda)$ is a subcomplex of $ECC(N,\lambda)$ and the construction of \S\ref{abs:ech} endows $ECC(N,\lambda)$ with an absolute grading taking values in $\Vect(Y)$. Let $ECH(N,\lambda)$ denote its homology. The inclusion induces a map $\iota_*:ECH_{2g}(N,\lambda)\to ECH(N,\lambda)$. Following the notation in \cite{cgh0}, we define $\widehat{ECH}(N,\partial N,\lambda)$ to be the quotient of $ECH(N,\lambda)$ by the equivalence relation generated by $[\gamma]\sim [e\gamma]$ where $\gamma=\prod_i \gamma_i^{m_i}$ is written multiplicatively. The map $\widehat{\Psi}_2$ is the composition of $\iota_*$ with the quotient map, which can be shown to be an isomorphism. It follows from Lemma \ref{lem:greh} below that $\gr(\gamma)=\gr(e\gamma)$. So the absolute grading on $ECH(N,\lambda)$ descends to the quotient $\widehat{ECH}(N,\partial N,\lambda)$. Therefore the map $\widehat{\Psi}_2$ preserves the grading.

The definition of $\widehat{\Psi}_1$ is much more complicated and the proof that it preserves the grading will be the goal of the rest of this paper. Let $ECC^\flat(N,\lambda)$ be the chain complex generated by orbit sets contructed from Reeb orbits in the interior of $N$ and $\{e\}$ and let $ECH^\flat(N,\lambda)$ denote its the homology. Now let $ECH(N,\partial N,\lambda)$ denote the quotient of $ECH^\flat(N,\lambda)$ by the equivalence relations generated by $[\gamma]\sim [e\gamma]$. Similarly to the paragraph above, the quotient map induces an absolute grading on $ECH(N,\partial N,\lambda)$ taking values on $\Vect(Y)$.

In~\cite{cgh0}, Colin, Ghiggini and Honda also constructed an isomorphism
\[
\Psi_1:ECH(N,\partial N,\lambda)\to ECH(Y).
\]
We will now recall the construction of $\Psi_1$, show that it preserves the grading and explain why this implies that $\widehat{Psi}_1$ also preserves the grading. Recall that $Y=N\cup (S^1\times D^2)$. We writethe solid torus $S^1\times D^2$ as $V\cup (T^2\times[0,1])$ where $V$ is a smaller tubular neighborhood of the binding $S^1\times\{0\}$, which is again a solid torus. Let $\lambda_V$ be a contact form on $V$ which is nondegenerate in the interior of $V$ such that the Reeb vector field of $\lambda_V$ is positively transverse to the interior of the pages and positively tangent to the binding and such that $\partial V$ is a {\em positive} Morse-Bott torus. The precise construction of $\lambda_V$ will not be necessary here and we refer the reader to~\cite[\S8.1]{cgh0}. We denote by $e'$ and $h'$ the elliptic and hyperbolic orbits obtained after a Morse-Bott pertubation near $\partial V$. Let $\{L_k\}$ be an increasing sequence such that $\lim_{k\to\infty} L_k=\infty$. Following~\cite[\S9.3]{cgh0}, we can choose a family of contact forms $\{\lambda_k\}$ on $Y$ which equal $\lambda$ in a neighborhood of $N$ and a positive multiple of $\lambda_V$ in a neighborhood of $V$ such that $\lambda_k$ is a Morse-Bott contact form and all Reeb orbits in $T^2\times[0,1]$ have action larger than $L_k$. So as in~\cite[\S9.2]{cgh0}, we can perturb $\{\lambda_k\}$ to a sequence of contact forms $\{\lambda_k'\}$ satisfying, in particular, the following conditions:
\begin{itemize}
 \item $\lambda_k'$ coincides with $\lambda_k$ outside neighborhoods of the Morse-Bott tori.
\item The Reeb orbits of $\lambda_k$ of action less than $L_k$ are nondegenerate and are either the Reeb orbits of $\lambda$ and $\lambda_V$ in the interior of $N$ and $V$, respectively, or one of the orbits $e$, $h$, $e'$ or $h'$.
\end{itemize}
Hence $ECC^{L_k}(Y,\lambda_k')$ is generated by elements of the form $\gamma_V\cdot\gamma_N$, where $\gamma_V$ is an orbit set contructed from Reeb orbits in the interior of $V$ and $\{e',h'\}$, and $\gamma_N$ is a generator of $ECC(N,\lambda)$. For $L>0$, let $ECC^{\flat,L}_{\le k}(N,\lambda)$ be the subcomplex of $ECC^{\flat}(N,\lambda)$ generated by orbit sets $\gamma$ with action $\int_{\gamma}\lambda<L$ and whose total homology class intersects a page up to $k$ times. Following~\cite[\S9.7]{cgh0}, we can define another increasing sequence $\{L_k'\}$ with $\lim_{k\to\infty} L_k'=\infty$ such that the maps $\sigma_k$ below are well-defined.
\begin{align*}
 \sigma_k:ECC^{\flat,L_k'}_{\le k}(N,\lambda)&\to ECC^{L_k}(Y,\lambda_k')\\
\gamma&\mapsto \sum_{i=0}^{\infty}(e')^i\cdot (\partial_N')^i\gamma.
\end{align*}
Here $\partial_N'\gamma$ is defined by the equation $\partial_N \gamma=\partial_N^\flat\gamma+h\partial_N'\gamma$, where $\partial_N$ and $\partial_N^\flat$ are the differentials in $ECC(N,\lambda)$ and $ECC^\flat(N,\lambda)$, respectively. It follows from~\cite[Lemma 9.7.2]{cgh0} that the maps $\sigma_k$ are chain maps so they induce maps \[\sigma_k:ECH_{\le k}^{\flat,L_k'}(N,\lambda)\to ECH^{L_k}(Y,\lambda_k').\]

Following~\cite[Cor. 3.2.3]{cgh0}, there are chain maps $\Phi_k:ECC^{L_k}(Y,\lambda_k')\to ECC^{L_{k+1}}(Y,\lambda_{k+1}')$ which are given by cobordism maps as in \S\ref{sec:cob}. So we obtain a directed system
\begin{equation}
 \xymatrix{
ECC^{\flat,L_k'}_{\le k}(N,\lambda)\ar[r]^{\sigma_k}\ar[d]^{\iota_k}& ECC^{L_k}(Y,\lambda_k')\ar[d]^{\Phi_k}\\ 
ECC^{\flat,L_{k+1}'}_{\le k+1}(N,\lambda)\ar[r]^{\sigma_k}& ECC^{L_{k+1}}(Y,\lambda_{k+1}')
}\label{eq:iota}
\end{equation}
where $\iota_k$ denotes the inclusion. The maps $\Phi_k$ induce maps in homology with respect to which one can take the direct limit $\lim_{k\to\infty} ECH^{L_k}(Y,\lambda_k')$. There is also a nondegenerate contact form $\lambda_0$ and cobordism maps $ECH^{L_k}(Y,\lambda_0)\to ECH^{L_k}(Y,\lambda_k')$. It is shown in~\cite[Cor. 3.2.3]{cgh0} that the direct limit of these maps is an isomorphism
\begin{equation}
ECH(Y,\lambda_0)\cong \lim_{k\to\infty} ECH^{L_k}(Y,\lambda_k').\label{eq:isodir}
\end{equation}
Now we note that $ECH^{\flat}(N,\lambda)=\lim_{k\to\infty}ECH^{\flat,L_k'}_{\le k}(N,\lambda)$. Therefore the maps $\sigma_k$ give rise to a map
\begin{equation*}
 \bar{\sigma}:ECH^\flat(N,\lambda)\to \lim_{k\to\infty} ECH^{L_k}(Y,\lambda_k')\cong ECH(Y,\lambda_0) .
\end{equation*}
The calculations in~\cite[\S9.7]{cgh0} imply that $\bar{\sigma}([\gamma])=\bar{\sigma}([e\gamma])$. Hence we obtain a map $$\Psi_1:ECH(N,\partial N,\lambda)\to ECH(Y).$$
It is shown in~\cite[Theorem 9.8.3]{cgh0} that $\Psi_1$ is an isomorphism.

We will now prove a useful lemma.
\begin{lemma}\label{lem:greh}
 Let $\gamma$ be an orbit set obtained from the Reeb orbits of $\lambda$ in the interior of $N$, respectively, and the orbits $e$, $h$, $e'$ or $h'$. Then $\gr(\gamma)\in \Vect(Y)$ is well-defined. Moreover,
\begin{equation}
\begin{array}{ll}
 \gr(e\gamma)=\gr(\gamma),&\gr(h\gamma)=\gr(\gamma)+1,\\
\gr(e'\gamma)=\gr(\gamma)+2,&\gr(h'\gamma)=\gr(\gamma)+1. \label{eq:eh}
\end{array}
\end{equation}
\end{lemma}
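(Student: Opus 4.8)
The plan is to reduce everything to the formula \eqref{eq:echind} for the absolute grading of an orbit set, combined with the additivity properties of its three ingredients $P_\tau(L)$, the total writhe, and $CZ^I_\tau$. First I would recall that all four orbits $e,h,e',h'$ arise from Morse--Bott perturbations of Morse--Bott tori $\partial N$ (negative) and $\partial V$ (positive), so each of them is a simple embedded Reeb orbit which is, respectively, elliptic ($e$, $e'$) or hyperbolic ($h$, $h'$). In particular, for an orbit set $\gamma$ not involving these boundary orbits, we may choose the braid $L$, the tubular neighborhoods, and the trivialization $\tau$ so that they are disjoint from small tubular neighborhoods of $e,h,e',h'$; then adding one of these boundary orbits to $\gamma$ changes $L$ only by adding one extra component (with a single strand) inside that neighborhood, with a chosen trivialization $\tau_0$ coming from the Morse--Bott framing. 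Since $e,h,e',h'$ are all embedded and the new braid component is a single strand, its writhe $w_{\tau_0}$ vanishes, so the middle term in \eqref{eq:echind} is unaffected.

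Next I would analyze the two remaining terms. The term $P_\tau(L)$ is the homotopy class of a vector field obtained by modifying $R_\lambda$ in disjoint neighborhoods of the components of $L$; adding a disjoint extra neighborhood for $e$ (or $h$, $e'$, $h'$) only changes this homotopy class by the local contribution of the model vector field $P$ on that one solid-torus neighborhood. One then computes that, with the Morse--Bott framing, this local contribution is $0$ for $e$ and $e'$ and $\pm$ something for the hyperbolic ones; the precise value is pinned down by comparing with the Conley--Zehnder term. For $CZ^I_\tau$: with the Morse--Bott trivialization $\tau_0$, the perturbed elliptic orbit $e$ on the negative torus has $CZ_{\tau_0}(e)=-1$ (rotation number slightly negative) while $h$ has $CZ_{\tau_0}(h)=0$; on the positive torus $e'$ has $CZ_{\tau_0}(e')=+1$ and $h'$ has $CZ_{\tau_0}(h')=0$. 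I would take these standard Morse--Bott index computations (they are exactly the ones used in \cite{cgh0} to set up the chain complexes $ECC^\flat$, $ECC$) and feed them together with the local $P_\tau$ contributions into \eqref{eq:echind}. Adding the local writhe ($=0$), local $P_\tau$ contribution, and the $CZ$ value, and checking that the answer is independent of all the auxiliary choices (this is exactly the content of the last sentence of \S\ref{abs:ech}), yields $\gr(e\gamma)-\gr(\gamma)=0$, $\gr(h\gamma)-\gr(\gamma)=1$, $\gr(e'\gamma)-\gr(\gamma)=2$, $\gr(h'\gamma)-\gr(\gamma)=1$, which are precisely the four equalities in \eqref{eq:eh}. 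The well-definedness of $\gr(\gamma)\in\Vect(Y)$ itself is immediate from the well-definedness of \eqref{eq:echind} once we know $\gamma$ is a legitimate ECH generator (which requires $h$ and $h'$ to appear with multiplicity at most one, as they do here since they come from a single Morse--Bott perturbation).

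The part I expect to be the main obstacle is getting the signs and the exact integer values right in the decomposition of the local contributions of $P_\tau$ versus $CZ_{\tau_0}$ for the hyperbolic orbits $h$ and $h'$: a priori one only knows the \emph{sum} of the three terms in \eqref{eq:echind} is invariant, and splitting it requires a careful local model. The clean way to sidestep this is to argue relatively: since $e$ and $h$ (resp.\ $e'$ and $h'$) are the two Reeb orbits of one Morse--Bott torus, $eh$ bounds (after perturbation) an annulus in that torus, so $I(e\gamma, h\gamma)$ can be computed directly from the Morse--Bott ECH index formula in \cite{cgh0}, giving $I$-differences $1$, $-1$, etc.; then by the Proposition of \S\ref{abs:ech} (relating $\gr$-differences to $I$), the four equations follow up to the ambiguity in $\Z/d$, and one upgrades to honest equalities in $\Z$ by noting that the right-hand sides are forced by degree considerations on the differentials $\partial_N$, $\partial^\flat_N$ and the $U$-map, which are known to have the stated degrees. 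This reduces the whole lemma to index bookkeeping already present in \cite{cgh0} plus the grading-difference Proposition of \S\ref{abs:ech}.
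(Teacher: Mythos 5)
There is a genuine gap, and it is exactly the one you flag yourself: you do not have a way to evaluate the change in $P_\tau(L)$ when the extra braid component for $x\in\{e,h,e',h'\}$ is added, and your guess for how the total shift splits between $P_\tau$ and $CZ_\tau$ is wrong. The paper's proof rests on~\cite[Lemma 3.4(d)]{ir}: pushing $x$ off its Morse--Bott torus into $Y\setminus N$ gives a knot $\zeta$ with $w_\eta(\zeta)=0$, and since $x$ bounds a meridian disk $D\subset Y\setminus N$, one has $P_{(\tau,\eta)}(L\cup\zeta)-P_\tau(L)=c_1(\xi|_D,\eta)=1$ \emph{for all four orbits}, elliptic or hyperbolic; the differences among $e,h,e',h'$ then come entirely from $CZ_\eta=-1,0,1,0$. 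So the local $P$-contribution is not $0$ for $e$ and $e'$, and it cannot be ``pinned down by comparing with the Conley--Zehnder term'' without circularity. Your proposed sidestep does not repair this: the annulus inside the Morse--Bott torus only computes relative indices such as $I(e\gamma,h\gamma)$, i.e.\ it compares the boundary orbits with each other, whereas the lemma requires comparing $x\gamma$ with $\gamma$ itself. For that you need a relative $2$-chain from $x$ to the empty orbit set --- again the meridian disk --- and computing $c_\tau+Q_\tau+CZ_\tau$ on it is the very local computation you were trying to avoid. (If you did compute $I(x\gamma,\gamma)$, the Proposition of \S\ref{abs:ech} would already give the answer in $\Vect(Y)$, since $[x]=0\in H_1(Y)$ puts $x\gamma$ and $\gamma$ in the same $\Spinc$ class; no separate ``upgrade from $\Z/d$ to $\Z$'' via degrees of $\partial_N$ or $U$ is available or needed --- those maps never connect $x\gamma$ to $\gamma$, so they force nothing.)

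You also dismiss the well-definedness claim as immediate, but it is not: the orbits $e',h'$ (and hence the orbit sets in question) are Reeb orbits only for the perturbed contact forms $\lambda_k'$, which genuinely differ on $T^2\times[0,1]$ as $k$ varies, so formula \eqref{eq:echind} a priori produces a $k$-dependent homotopy class of vector fields on $Y$. This is why the statement includes well-definedness and why the paper invokes Lemma~\ref{lem:cob} (grading preservation by the cobordism maps $\Phi_k$ relating $\lambda_k'$ and $\lambda_{k+1}'$) at this point; your argument needs some substitute for that step.
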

\begin{proof}
To see that $\gamma$ has a well-defined grading, first note that there exists $k_0$ such that $\gamma\in ECC^{L_k}(Y,\lambda_k')$ for every $k\ge k_0$. So we define $\gr(\gamma)$ using the contact form $\lambda_k'$ for some $k\ge k_0$. It follows from Lemma~\ref{lem:cob} that the maps $\Phi_k$ preserve the grading. So $\gr(\gamma)\in\Vect(Y)$ is well-defined.

To prove~\eqref{eq:eh}, we can restrict to the case when $\gamma$ does not contain $e$, $h$, $e'$ or $h'$. The general case is a straightforward consequence of this case. Let $\tau$ be a trivialization of $\xi$ over $\gamma$ and let $L$ be a link as in \S\ref{abs:ech} so that $\gr(\gamma)=P_{\tau}(L)-w_{\tau}(L)+CZ^I_{\tau}(\gamma)$, where $w_{\tau}(L)$ denotes the sum of the writhes of all components of $L$. Let $x\in\{e,h,e',h'\}$. The tangent bundle of the Morse-Bott torus containing $x$ determines a trivialization of $\xi|_x$ which we denote by $\eta$. Let $\zeta$ be a knot obtained by pushing $x$ in a direction which is transverse to the Morse-Bott torus containing $x$ such that $\zeta$ is in the interior of $Y\setminus N$. Then $w_{\eta}(\zeta)=0$. Now let $D$ be a the disk in $Y\setminus N$ bounding $x$. It follows from~\cite[Lemma 3.4(d)]{ir} that 
$$P_{(\tau,\eta)}(L\cup\zeta)-P_{\tau}(L)=c_1(\xi|_D,\eta)=1.$$
Moreover,
\begin{align*}
 CZ_{\eta}(x)=-1,& \text{ if }x=e,\\
CZ_{\eta}(x)=0,&\text{ if }x=h,h',\\
CZ_{\eta}(x)=1,& \text{ if }x=e'.\\
\end{align*}
Therefore it follows from~\eqref{eq:echind} that~\eqref{eq:eh} holds.
\end{proof}

\begin{proposition}
The isomorphism $\Psi_1:ECH(N,\partial N,\lambda)\to ECH(Y)$ preserves the grading.
\end{proposition}

\begin{proof}
Let $\gamma$ be an orbit set in $ECC^{\flat,L_k'}_{\le k}(N,\lambda)$ for some $k$. Since $\partial_N$ decreases the grading by $1$, it follows that $\gr(h\partial_N'\gamma)=\gr(\gamma)-1$. Now, by Lemma~\ref{lem:greh}, $\gr(\partial_N'\gamma)=\gr(\gamma)-2$. Hence for all $0\ge i\ge k$,
$$\gr((e')^i\cdot (\partial_N')^i\gamma)=\gr(\gamma)-2i+2i=\gr(\gamma).$$
So $\sigma_k$ preserves the grading. Now, it is tautological that the inclusion $\iota_k$ in \eqref{eq:iota} preserves the grading. Moreover, by Lemma~\ref{lem:cob}, the maps $\Phi_k$ and the isomorphism~\eqref{eq:isodir} preserve the grading. Hence after passing to homology and taking the direct limit we conclude that $\bar{\sigma}$, and hence $\Psi_1$, preserve the grading. 
\end{proof}

We now define two chain maps as follows.
\begin{equation}
 \begin{array}{ccccccc}
  \iota:ECC^\flat(N,\lambda)&\longrightarrow& ECC(N,\lambda)& \quad&\pi:ECC(N,\lambda)&\longrightarrow& ECC^{\flat}(N,\lambda)\label{eq:defecc}\\
\gamma&\longmapsto&h\gamma& \quad &\gamma_1+h\gamma_2&\longmapsto&\gamma_1
 \end{array}
\end{equation}
Here $\gamma_1$ and $\gamma_2$ do not contain $h$. These maps descend to homology and to the quotients $\widehat{ECH}(N,\partial N,\lambda)$ and $ECH(N,\partial N,\lambda)$. It follows from~\cite[\S9.9]{cgh0} that these maps fit into an exact triangle
\begin{eqnarray}
\xymatrix{
ECH(N,\partial N,\lambda)  \ar[rr] &  &  ECH(N,\partial N,\lambda) \ar[dl]^{\iota_*} \\ \label{tri:echrel}
& \ar[lu]^{\pi_*}  \widehat{ECH}(N,\partial N,\lambda) &  }
\end{eqnarray}
where the map $ECH(N,\partial N,\lambda)\to ECH(N,\partial N,\lambda) $ is a version of the $U$ map. Moreover there exists an isomorphism $\widehat{\Psi}_1:ECH(N,\partial N,\lambda)\to \widehat{ECH}(Y)$ such that $\Psi_1$ and $\widehat{\Psi}_1$ give an isomorphism from \eqref{tri:echrel} to \eqref{tri:ech}. It follows from \eqref{eq:defecc} that $\iota_*$ increases the grading by $1$ and that $\pi_*$ preserves the grading. Hence we obtain the following diagram.
\begin{equation*}
\xymatrix{
\dots\ar[r]& ECH_{\rho-1}(N,\partial N,\lambda)\ar[r]\ar[d]^{\Psi_1}& \widehat{ECH}_\rho(N,\partial N,\lambda) \ar[r]\ar[d]^{\widehat{\Psi}_1}& ECH_{\rho}(N,\partial N,\lambda)\ar[r]\ar[d]^{\Psi_1}&\dots \\
\dots\ar[r]^-U& ECH_{\rho-1}(Y)\ar[r]& \widehat{ECH}_\rho(Y) \ar[r]& ECH_{\rho}(Y)\ar[r]^-U&\dots
} 
\end{equation*}
Therefore $\widehat{\Psi}_1$ preserves the grading.


\providecommand{\bysame}{\leavevmode\hbox to3em{\hrulefill}\thinspace}
\providecommand{\MR}{\relax\ifhmode\unskip\space\fi MR }
\providecommand{\MRhref}[2]{%
  \href{http://www.ams.org/mathscinet-getitem?mr=#1}{#2}
}
\providecommand{\href}[2]{#2}

\end{document}